\documentclass[11pt,oneside,reqno]{amsart}
\usepackage[all]{xy}
\usepackage{amsfonts,amsmath,amssymb,amscd}
\usepackage[breaklinks]{hyperref}
\usepackage{caption} 
\captionsetup{belowskip=12pt,aboveskip=4pt}

\setlength{\oddsidemargin}{0.40in}
\setlength{\evensidemargin}{0.0in}
\setlength{\textwidth}{6in}
\setlength{\textheight}{8.4in}
\setlength{\parskip}{0.15in}

\renewcommand{\(}{\left\(}
\renewcommand{\)}{\right\)}
\renewcommand{\[}{\left\[}
\renewcommand{\]}{\right\]}

\numberwithin{equation}{section}

\newtheorem{theorem}{Theorem}

\newtheorem{lemma}{{\bf Lemma}}

\newtheorem{remark}[subsection]{Remark}

\begin{document}
\title[$K_{\frac{1}{2},w}(x)$ and Humbert functions]{The generalized modified Bessel function  $K_{z,w}(x)$ at $z=1/2$ and Humbert functions}



\author{Rahul Kumar}
\address{Discipline of Mathematics, IIT Gandhinagar \\
Palaj, Gandhinagar\\ Gujarat-382355 \\ India}
\email{rahul.kumr@iitgn.ac.in}



\thanks{2010 Mathematics Subject Classification: Primary 33E20 Secondary 33C10.   \\
Keywords: generalized modified Bessel function, Humbert function, Voigt profile, Distribution function}
\maketitle
\pagenumbering{arabic}
\pagestyle{headings}

\begin{abstract}
Recently Dixit, Kesarwani, and Moll introduced a generalization $K_{z,w}(x)$ of the modified Bessel function $K_{z}(x)$ and showed that it satisfies an elegant theory similar to $K_{z}(x)$. In this paper, we show that while $K_{\frac{1}{2}}(x)$ is an elementary function, $K_{\frac{1}{2},w}(x)$ can be written in the form of an infinite series of Humbert functions. As an application of this result, we generalize the transformation formula for the logarithm of the Dedekind eta function $\eta(z)$.

\end{abstract}{}

\section{Introduction}

Bessel functions have been studied for almost two hundred years and their theory shows no sign of exhaustion. They have been proved to be extremely useful not only in mathematics but also in Engineering and other sciences. Bessel functions of the first and second kinds of order $z$ are defined by \cite[p. 40]{watson}
\begin{align*}
J_z(x):&=\sum_{n=0}^{\infty}\frac{(-1)^n(x/2)^{2n+z}}{n!\Gamma(n+1+z)}, ~~~~~~~~ |x|<\infty, \nonumber
\end{align*}
and \cite[p. 64]{watson}
\begin{align*}
Y_z(x):=\frac{J_z(x)\cos(\pi z)-J_{-z}(x)}{\sin(\pi z)}
\end{align*}
respectively, where $\Gamma(s)$ is the Gamma function. The modified Bessel functions of the first and second kinds of order $z$ are defined as \cite[p. 77]{watson}
\begin{align*}
I_z(x):=
\begin{array}{cc}
\bigg\{ & 
\begin{array}{cc}
e^{-\frac{1}{2}\pi z i}J_z\left(e^{\frac{1}{2}\pi i}x\right), &\mathrm{if}-\pi<\mathrm{arg}\ x\leq\frac{\pi}{2} \\
e^{-\frac{3}{2}\pi z i}J_z(e^{-\frac{3}{2}\pi i}x), &\mathrm{if}\ \frac{\pi}{2}<\mathrm{arg}\ x\leq\pi,
\end{array}
\end{array}
\end{align*}
and \cite[p. 78]{watson}
\begin{align}\label{kz}
K_z(x):=\frac{\pi}{2}\frac{I_{-z}(x)-I_z(x)}{\sin(z\pi)}
\end{align}
respectively. For the extensive study of the Bessel functions we refer the reader to  \cite{watson}.

Many generalizations of Bessel functions and modified Bessel functions have been studied by several authors. 
Recently, Dixit, Kesarwani, and Moll introduced a different generalization of \eqref{kz} which they termed as the \emph{generalized modified Bessel function $K_{z,w}(x)$}. The motivation behind introducing this function was to  generalize the Ramanujan-Guinand formula \cite[p. 253]{lnb}, which states that for $ab=\pi^2$,
\begin{align}\label{ramguinand}
\sqrt{a}\sum_{n=1}^{\infty}\sigma_{-z}(n)n^{\frac{z}{2}}&K_{\frac{z}{2}}(2na)-\sqrt{b}\sum_{n=1}^{\infty}\sigma_{-z}(n)n^{\frac{z}{2}}K_{\frac{z}{2}}(2nb) \nonumber\\
&=\frac{1}{4}\Gamma\left(\frac{z}{2}\right)\zeta(z)\left\{b^{\frac{1-z}{2}}-a^{\frac{1-z}{2}}\right\}+\frac{1}{4}\Gamma\left(-\frac{z}{2}\right)\zeta(-z)\left\{b^{\frac{1+z}{2}}-a^{\frac{1+z}{2}}\right\},
\end{align}
where $\sigma_{z}(n)$ is the generalized divisor function defined by $\sigma_{z}(n):=\sum_{d|n}d^z$ and $\zeta(s)$ denote the Riemann zeta function. It is known that \cite[Theorem 1.1]{BerndtLeeSohn} this formula is equivalent to the functional equation of the non-holomorphic Eisenstein series on SL$_2(\mathbb{Z})$.

Dixit, Kesarwani, and Moll defined the generalized modified Bessel function  $K_{z,w}(x)$ for $z,w\in\mathbb{C}, x\in\mathbb{C}\backslash\{x\in\mathbb{R}:x\leq0\}$, and Re$(s)>\pm$Re$(z)$, by  \cite[Equation (1.3)]{DixAas}: 
\begin{align}\label{defKzw}
K_{z,w}(x):&=\frac{1}{2\pi i}\int_{(c)}\Gamma\left(\frac{s-z}{2}\right)\Gamma\left(\frac{s+z}{2}\right){}_1F_1\left(\frac{s-z}{2};\frac{1}{2};\frac{-w^2}{4}\right){}_1F_1\left(\frac{s+z}{2};\frac{1}{2};\frac{-w^2}{4}\right)\nonumber\\
&\qquad\qquad\times 2^{s-2}x^{-s}\mathrm{d}s,
\end{align}
where $c:=\mathrm{Re}(s)>\pm \mathrm{Re}(z)$ and ${}_1F_1(a;c;z)$ is the confluent hypergeometric function defined by \cite[p. 172, Equation (7.3)]{temme}
\begin{align}\label{1f1}
{}_1F_1(a;c;z):=\sum_{m=0}^{\infty}\frac{(a)_m}{(c)_m}\frac{z^m}{m!},
\end{align}
with $(a)_m:=a(a+1)(a+2)...(a+m-1)$ for $a\in\mathbb{C}.$ Here, and throughout the paper, $\int_{(c)}$ denotes the line integral $\int_{c-\infty}^{c+i\infty}$.\\
 It can be seen that by letting $w=0$ in \eqref{defKzw}, we get \cite[p. 115, Formula 11.1]{oberhettinger}
\begin{align*}
K_{z,0}(x):&=\frac{1}{2\pi i}\int_{(c)}\Gamma\left(\frac{s-z}{2}\right)\Gamma\left(\frac{s+z}{2}\right)2^{s-2}x^{-s}\mathrm{d}s\nonumber\\
&=K_z(x),
\end{align*}
where, $K_z(x)$ is the usual modified Bessel function \eqref{kz}. The  authors of \cite{DixAas} initiated the theory of $K_{z,w}(x)$ and showed that it is as rich as that of $K_z(x)$. They also gave a beautiful generalization of the aforementioned Ramanujan-Guinand formula \eqref{ramguinand}. We state their remarkable result \cite[Theorem 1.4]{DixAas} in the following theorem.

\begin{theorem}\label{DixRamGuin}
Let $z,w\in\mathbb{C}$. Let $K_{z,w}(x)$ be defined in \eqref{defKzw}. For $a,b>0$ such that $ab=\pi^2$,
{\allowdisplaybreaks\begin{align}\label{genRamGuin}
&\sqrt{a}\sum_{n=1}^\infty \sigma_{-z}(n)n^{z/2}e^{-\frac{w^2}{4}}K_{\frac{z}{2},iw}(2na)-\sqrt{b}\sum_{n=1}^\infty \sigma_{-z}(n)n^{z/2}e^{\frac{w^2}{4}}K_{\frac{z}{2},w}(2nb)\nonumber\\
&=\frac{1}{4}\Gamma\left(\frac{z}{2}\right)\zeta(z)\left\{b^{\frac{1-z}{2}}{}_1F_1\left(\frac{1-z}{2};\frac{1}{2};\frac{w^2}{4}\right)-a^{\frac{1-z}{2}} {}_1F_1\left(\frac{1-z}{2};\frac{1}{2};-\frac{w^2}{4}\right)\right\} \nonumber\\
&\qquad +\frac{1}{4}\Gamma\left(-\frac{z}{2}\right)\zeta(-z)\left\{b^{\frac{1+z}{2}}{}_1F_1\left(\frac{1+z}{2};\frac{1}{2};\frac{w^2}{4}\right)-a^{\frac{1+z}{2}} {}_1F_1\left(\frac{1+z}{2};\frac{1}{2};-\frac{w^2}{4}\right)\right\}.
\end{align}}
\end{theorem}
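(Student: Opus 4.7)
Plan: The argument follows the Mellin-Barnes blueprint used for the classical formula \eqref{ramguinand}, with Kummer's transformation of the ${}_1F_1$ factors accounting for the switch between $K_{z/2,w}$ and $K_{z/2,iw}$. First I would substitute \eqref{defKzw} for $K_{z/2,w}(2nb)$ into the expression $\sqrt{b}\,e^{w^2/4}\sum_{n}\sigma_{-z}(n)n^{z/2}K_{z/2,w}(2nb)$ and interchange summation with the contour integral along $\mathrm{Re}(s)=c_0>1+|\mathrm{Re}(z)|/2$. Because $\sum_{n\ge 1}\sigma_{-z}(n)n^{z/2-s}=\zeta(s-z/2)\zeta(s+z/2)$ in this half-plane, one obtains
\begin{align*}
\sqrt{b}\,e^{w^2/4}\sum_{n=1}^{\infty}\sigma_{-z}(n)n^{z/2}K_{z/2,w}(2nb)=\frac{e^{w^2/4}}{8\pi i}\int_{(c_0)}\Phi_-(s)\,\zeta(s-\tfrac{z}{2})\zeta(s+\tfrac{z}{2})\,b^{1/2-s}\,ds,
\end{align*}
where $\Phi_\pm(s):=\Gamma(\tfrac{s-z/2}{2})\Gamma(\tfrac{s+z/2}{2}){}_1F_1(\tfrac{s-z/2}{2};\tfrac12;\pm\tfrac{w^2}{4}){}_1F_1(\tfrac{s+z/2}{2};\tfrac12;\pm\tfrac{w^2}{4})$.

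Next I would shift this contour leftward to $\mathrm{Re}(s)=c'\in(|\mathrm{Re}(z)|/2,\,1-|\mathrm{Re}(z)|/2)$, picking up the residues at the simple zeta poles $s=1\pm z/2$. Using $\Gamma(\tfrac12)=\sqrt{\pi}$, ${}_1F_1(\tfrac12;\tfrac12;u)=e^u$, the completed functional equation $\pi^{-s/2}\Gamma(s/2)\zeta(s)=\pi^{-(1-s)/2}\Gamma(\tfrac{1-s}{2})\zeta(1-s)$ to rewrite $\sqrt{\pi}\,\Gamma(\tfrac{1\pm z}{2})\zeta(1\pm z)$ as $\pi^{1\pm z}\Gamma(\mp z/2)\zeta(\mp z)$, and $ab=\pi^2$ to convert $\pi^{1\pm z}b^{-(1\pm z)/2}$ into $a^{(1\pm z)/2}$, the residues simplify to
\begin{align*}
R_1:=\tfrac{1}{4}\Gamma(\tfrac{z}{2})\zeta(z)\,a^{(1-z)/2}\,{}_1F_1(\tfrac{1-z}{2};\tfrac12;-\tfrac{w^2}{4})+\tfrac{1}{4}\Gamma(-\tfrac{z}{2})\zeta(-z)\,a^{(1+z)/2}\,{}_1F_1(\tfrac{1+z}{2};\tfrac12;-\tfrac{w^2}{4}).
\end{align*}

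In the leftover integral over $(c')$ I would then apply Kummer's transformation ${}_1F_1(a;\tfrac12;-\tfrac{w^2}{4})=e^{-w^2/4}{}_1F_1(\tfrac12-a;\tfrac12;\tfrac{w^2}{4})$ to both hypergeometric factors (producing an overall $e^{-w^2/2}$), the functional equation to each $\Gamma$-$\zeta$ pair, and the substitution $s\mapsto 1-s$. The resulting $\pi$-powers together with $b^{s-1/2}$ collapse to $a^{1/2-s}$ via $ab=\pi^2$, and the exponentials combine to $e^{-w^2/4}$; the integrand takes the form $\tfrac{e^{-w^2/4}}{4}\Phi_+(s)\zeta(s-\tfrac{z}{2})\zeta(s+\tfrac{z}{2})a^{1/2-s}$, integrated over $\mathrm{Re}(s)=1-c'$. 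Shifting this new contour rightward back to $(c_0)$ crosses the same two zeta poles, now with $\Phi_+$ in place of $\Phi_-$, contributing
\begin{align*}
-R_2:=-\tfrac{1}{4}\Gamma(\tfrac{z}{2})\zeta(z)\,b^{(1-z)/2}\,{}_1F_1(\tfrac{1-z}{2};\tfrac12;\tfrac{w^2}{4})-\tfrac{1}{4}\Gamma(-\tfrac{z}{2})\zeta(-z)\,b^{(1+z)/2}\,{}_1F_1(\tfrac{1+z}{2};\tfrac12;\tfrac{w^2}{4}),
\end{align*}
and leaving a remaining integral over $(c_0)$ which is precisely $\sqrt{a}\,e^{-w^2/4}\sum_{n}\sigma_{-z}(n)n^{z/2}K_{z/2,iw}(2na)$, since $-(iw)^2/4=w^2/4$ makes $\Phi_+$ agree with the Mellin-Barnes integrand for $K_{z/2,iw}$.

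Combining the two shifts yields $\sqrt{b}\,e^{w^2/4}\sum_{n}\sigma_{-z}(n)n^{z/2}K_{z/2,w}(2nb)=R_1+\sqrt{a}\,e^{-w^2/4}\sum_{n}\sigma_{-z}(n)n^{z/2}K_{z/2,iw}(2na)-R_2$, which is \eqref{genRamGuin} after rearrangement. The main obstacle is the intertwined bookkeeping of signs and exponentials through Kummer's transformation and the $\zeta$-functional equation: the $a^{(1\pm z)/2}$ residues carry $-w^2/4$ in their hypergeometric factors (they arise from the first leftward shift, before Kummer is applied), while the $b^{(1\pm z)/2}$ residues carry $+w^2/4$ (from the second rightward shift, after Kummer), producing exactly the asymmetry visible on the right-hand side of \eqref{genRamGuin}. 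A secondary issue is that the contour shifts formally require $|\mathrm{Re}(z)|<1$ so that the line $\mathrm{Re}(s)=c'$ can be placed between the $\zeta$-poles and the $\Gamma$-poles at $s=\pm z/2-2k$; the identity then extends to arbitrary $z\in\mathbb{C}$ by analytic continuation.
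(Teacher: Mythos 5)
This theorem is not proved in the paper at all: it is quoted verbatim from Dixit--Kesarwani--Moll \cite[Theorem 1.4]{DixAas}, so there is no in-paper proof to compare against. Judged on its own, your Mellin--Barnes argument is correct and is the natural self-contained route. I checked the bookkeeping: writing $\sum_{n\ge1}\sigma_{-z}(n)n^{z/2-s}=\zeta(s-\tfrac z2)\zeta(s+\tfrac z2)$, the residues at $s=1\mp\tfrac z2$ produce $\tfrac14\sqrt{\pi}\,\Gamma(\tfrac{1\mp z}{2})\zeta(1\mp z)\,{}_1F_1(\tfrac{1\mp z}{2};\tfrac12;-\tfrac{w^2}{4})\,b^{-(1\mp z)/2}$ after the factor ${}_1F_1(\tfrac12;\tfrac12;-\tfrac{w^2}{4})=e^{-w^2/4}$ cancels the prefactor $e^{w^2/4}$, and the completed functional equation plus $ab=\pi^2$ turns these into your $R_1$; the reflection $s\mapsto 1-s$ combined with Kummer applied to both ${}_1F_1$ factors and with $\pi^{1-2s}b^{s-1/2}=a^{1/2-s}$ does convert the integrand with $\Phi_-$ and $b$ into the one with $\Phi_+$ and $a$ up to the net factor $e^{-w^2/2}$, which is exactly what is needed since $-(iw)^2/4=w^2/4$; and the second shift yields $-R_2$ plus the $a$-series, giving $\mathrm{LHS}_a-\mathrm{LHS}_b=R_2-R_1$, which is \eqref{genRamGuin}.

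Two points deserve explicit justification in a full write-up. First, the interchange of summation and integration and the vanishing of the horizontal segments require a growth estimate for ${}_1F_1\bigl(\tfrac{s\pm z/2}{2};\tfrac12;\mp\tfrac{w^2}{4}\bigr)$ on vertical lines: it grows only subexponentially in $|\mathrm{Im}\,s|$ (roughly like $e^{C\sqrt{|s|}}$), so it is dominated by the Stirling decay \eqref{stirling} of the Gamma factors; this should be stated rather than assumed. Second, the final analytic continuation in $z$ beyond $|\mathrm{Re}(z)|<1$ needs both sides to be analytic in $z$, which for the left-hand side follows from the uniform convergence of the Bessel series guaranteed by the large-$x$ asymptotic \eqref{asymptoticlarge}. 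With those two remarks added, the argument is complete.
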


The modest goal of this work to add the further little piece to this theory.

It is well known that $K_z(x)$ reduces to an elementary function when $z=\frac{1}{2}$, namely, \cite[p. 978, Formula 8.469, no. 3]{grad} 
\begin{align}\label{khalf}
K_{\frac{1}{2}}(x)=\sqrt{\frac{\pi}{2x}}e^{-x}.
\end{align} 
In this work we obtain a new representation for $K_{\frac{1}{2},w}(x)$ which reduces to \eqref{khalf} when $w=0$. This new representation for $K_{\frac{1}{2},w}(x)$ is in terms of a series of Humbert functions or confluent hypergeometric functions of two variables. The study of Humbert functions was initiated by P. Humbert \cite{humbert} in 1922. He defined these functions as a limiting case of Appell functions \cite{appell}, which is why some authors call them `confluent Appell functions'. For more information on Humbert functions, we refer the reader to \cite{brychkov}, \cite{brychkovKimRathie}, \cite{humbert}, \cite{srivastava} and the references therein. Humbert functions are defined by \cite[pp. 58-59, Equations (38-40)]{srivastava}
\begin{align}
\Phi_1(a,b;c;x,y)&=\sum_{m,n=0}^{\infty}\frac{(a)_{m+n}(b)_m}{(c)_{m+n}}\frac{x^my^n}{m!\ n!},\ |x|<1,\ |y|<\infty,\nonumber\\
\Phi_2(a,a';c;x,y)&=\sum_{m,n=0}^{\infty}\frac{(a)_m(a')_n}{(c)_{m+n}}\frac{x^my^n}{m!\ n!},\ |x|<\infty,\ |y|<\infty,\nonumber\\
\Phi_3(a;c;x,y)&=\sum_{m,n=0}^{\infty}\frac{(a)_m}{(c)_{m+n}}\frac{x^my^n}{m!\ n!},\ |x|<\infty,\ |y|<\infty\label{humbert3}.
\end{align}

We state our main result below.
\begin{theorem}\label{K_(half,w)}
Let $\Phi_3(a;c;x,y)$ be defined by \eqref{humbert3}. Let $w\in\mathbb{C},\ x\in\mathbb{C}\backslash\{x\in\mathbb{R}:x\leq0\}$. Then
\begin{align*}
K_{\frac{1}{2},w}(x)=\sqrt{\frac{\pi}{2x}}e^{-x}\sum_{r=0}^{\infty}\frac{\left(\frac{w^2x^2}{64}\right)^r}{r!\left(\frac{1}{2}\right)_r\left(\frac{1}{2}\right)_{2r}}\Phi_3\left(\frac{1}{2};\frac{1}{2}+2r;-\frac{w^2}{4},-\frac{w^2x}{4}\right).
\end{align*}
\end{theorem}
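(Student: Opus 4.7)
My plan is to unpack the Mellin--Barnes representation \eqref{defKzw} at $z=\tfrac{1}{2}$. Legendre's duplication formula
\begin{equation*}
\Gamma\!\left(\tfrac{s-1/2}{2}\right)\Gamma\!\left(\tfrac{s+1/2}{2}\right)=\sqrt{\pi}\,2^{3/2-s}\,\Gamma(s-\tfrac{1}{2})
\end{equation*}
collapses the outer pair of gammas, so that the integrand becomes $\sqrt{\pi/2}\,\Gamma(s-\tfrac{1}{2})$ times the product of the two confluent hypergeometric functions and $x^{-s}$. Since the Mellin inverse of $\sqrt{\pi/2}\,\Gamma(s-\tfrac{1}{2})$ is $\sqrt{\pi/(2x)}\,e^{-x}$, this already recovers \eqref{khalf} at $w=0$ and explains the prefactor in the claimed identity. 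I then expand both ${}_1F_1$'s as a double series in $\mu,\nu\ge 0$, whose summands contain $\Gamma(s-\tfrac{1}{2})(\tfrac{s-1/2}{2})_\mu(\tfrac{s+1/2}{2})_\nu$; a second invocation of duplication rewrites this as $\tfrac{2^{s-3/2}}{\sqrt{\pi}}\,\Gamma(\tfrac{s-1/2}{2}+\mu)\,\Gamma(\tfrac{s+1/2}{2}+\nu)$.

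I next split the double sum into the regimes $\nu\ge\mu$ and $\mu>\nu$, set $r=\min(\mu,\nu)$ and $k=|\nu-\mu|$, and apply the duplication formula a third time: in each regime, the paired gammas (with arguments differing by $\tfrac{1}{2}+k$) collapse to $\sqrt{\pi}\,2^{3/2-s-2r}\,\Gamma(s-\tfrac{1}{2}+2r)$ multiplied by the Pochhammer factor $(\tfrac{s\pm 1/2}{2}+r)_k$. The latter is a polynomial of degree $k$ in $s$; I linearise it against $\Gamma(s-\tfrac{1}{2}+2r)$ using the recursion $s\,\Gamma(s-\tfrac{1}{2}+N)=\Gamma(s+\tfrac{1}{2}+N)+\tfrac{1}{2}\Gamma(s-\tfrac{1}{2}+N)$ iteratively, producing a finite combination of $\Gamma(s-\tfrac{1}{2}+2r+j)$ for $0\le j\le k$. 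Term-by-term Mellin inversion via $M^{-1}[\Gamma(s-\tfrac{1}{2}+N)](x)=x^{N-1/2}e^{-x}$ then converts each summand into $\sqrt{\pi/(2x)}\,e^{-x}$ times a polynomial in $x$.

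The concluding step is to recombine the resulting triple series into the shape stated in the theorem. The powers of $2$ shed by the three successive duplications combine with the coefficients $(-w^2/4)^{2r+k}$ to produce the outer prefactor $(w^2 x^2/64)^r/(r!(\tfrac{1}{2})_r(\tfrac{1}{2})_{2r})$, while the inner sum (over $k$, both sign regimes, and the linearisation index) must be reindexed as a double sum over a new pair $(m,n)$ matching $\Phi_3(\tfrac{1}{2};\tfrac{1}{2}+2r;-\tfrac{w^2}{4},-\tfrac{w^2 x}{4})$. The Pochhammer identity $(\tfrac{1}{2})_{2r}(\tfrac{1}{2}+2r)_{m+n}=(\tfrac{1}{2})_{2r+m+n}$ is central to collapsing the combinatorics. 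I anticipate this reassembly to be the main obstacle: ensuring that the two off-diagonal halves, together with the polynomial-in-$x$ remainders from the linearisation, interlock cleanly to yield exactly the Humbert series $\Phi_3$ predicted, rather than leaving behind stray correction terms.
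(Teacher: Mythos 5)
Your opening moves coincide with the paper's: setting $z=\tfrac12$, collapsing $\Gamma\left(\tfrac{s}{2}-\tfrac14\right)\Gamma\left(\tfrac{s}{2}+\tfrac14\right)$ by duplication to $\sqrt{\pi}\,2^{3/2-s}\Gamma\left(s-\tfrac12\right)$, and recognizing the prefactor $\sqrt{\pi/(2x)}\,e^{-x}$ are all correct. But from that point the paper does not expand the two ${}_1F_1$'s into a raw double series. It invokes the Burchnall--Chaundy product formula
${}_1F_1(a;c;x)\,{}_1F_1(a';c;x)=\sum_{r\ge 0}\tfrac{(a)_r(a')_r}{r!\,(c)_r(c)_{2r}}x^{2r}\,{}_1F_1(a+a'+2r;c+2r;x)$,
which is precisely the diagonal reorganization (your $r=\min(\mu,\nu)$ step) carried out once and for all with the correct coefficients; then it inverts the Mellin transform via the lemma
$\frac{1}{2\pi i}\int_{(c)}\frac{\Gamma\left(s-\frac12\right)\Gamma(s+n)}{\Gamma(s)}x^{-s}\,ds=(-1)^n\sqrt{\pi/x}\;{}_1F_1\left(\tfrac12;\tfrac12-n;-x\right)/\Gamma\left(\tfrac12-n\right)$,
applies Kummer's transformation, identifies Laguerre polynomials $L_n^{-n-1/2}(x)$, and finally uses the generating function $\sum_{k}t^kL_k^{\alpha-k}(x)/(\beta)_k=\Phi_3(-\alpha;\beta;-t,-tx)$ to produce the Humbert functions. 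Those two classical identities are exactly the packaged forms of the two recombinations your plan would have to perform by hand.

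The genuine gap is that your final ``reassembly'' --- which you yourself flag as the main obstacle --- is where the entire content of the theorem lives, and it is not carried out. Concretely: (i) the two off-diagonal regimes are not symmetric, since one carries the Pochhammer $\left(\tfrac{s+1/2}{2}+r\right)_k$ and the other $\left(\tfrac{s-1/2}{2}+r\right)_k$, so after linearisation they yield different Stirling-like coefficient arrays whose interlocking into the clean $\frac{(1/2)_m}{(1/2+2r)_{m+n}\,m!\,n!}$ coefficients of $\Phi_3$ is a nontrivial identity you have not established; (ii) your linearisation recursion is misstated --- the correct relation is $s\,\Gamma\left(s-\tfrac12+N\right)=\Gamma\left(s+\tfrac12+N\right)+\left(\tfrac12-N\right)\Gamma\left(s-\tfrac12+N\right)$, not with the constant $\tfrac12$, and this $N$-dependence propagates into all the coefficients you must track; (iii) your own bookkeeping gives $(-w^2/4)^{2r}x^{2r}/4^r=(w^4x^2/64)^r$, which matches the last display of the paper's proof but not the prefactor $(w^2x^2/64)^r$ you quote from the theorem statement (the statement appears to contain a typo, and you have inherited it rather than detected it, which suggests the coefficient matching was not actually checked). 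As it stands the proposal is a plausible strategy, not a proof; it would become one if you either verify the reassembly identity directly or replace it by the Burchnall--Chaundy expansion and the Laguerre/$\Phi_3$ generating function as the paper does.
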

\noindent
Some asymptotic results for $\Phi_3(a;c;x,y)$ have been obtained by Wald and Henkel in \cite{waldHenkel}. 

\begin{remark}
	It can be clearly seen that the expression in \eqref{khalf} for $K_{\frac{1}{2}}(x)$ is just the first term of the above series.
\end{remark}

\begin{remark}
From \cite[Theorem 1.12]{DixAas}, for large values of $|x|$ and $|\arg(x)|<\frac{\pi}{4}$ and $w,z\in\mathbb{C}$, we have
{\allowdisplaybreaks\begin{align}\label{asymptoticlarge}
K_{z,w}(x)=\frac{1}{2}\sqrt{\frac{\pi}{2x}}e^{-x}\left(\cos(w\sqrt{2x})P-\sin(w\sqrt{2x})Q+e^{-\frac{1}{4}w^2}R\right),
\end{align}	}
where
{\allowdisplaybreaks\begin{align*}
P&=1+\frac{32z^2-3w^2-8}{64x}+O\left(x^{-2}\right),\\
Q&=\frac{w}{4\sqrt{2x}}+O\left(x^{-\frac{3}{2}}\right),\\
R&=1+\frac{(4z^2-1)(2-w^2)}{16x}+O\left(x^{-2}\right).
\end{align*}}
Therefore, by using Theorem \ref{K_(half,w)} and \eqref{asymptoticlarge}, it is easy to see that for large value of $|x|$ and $|\arg x|<\frac{\pi}{4}$, we have
\begin{align*}
\sum_{r=0}^{\infty}\frac{\left(\frac{w^2x^2}{64}\right)^r}{r!\left(\frac{1}{2}\right)_r\left(\frac{1}{2}\right)_{2r}}\Phi_3\left(\frac{1}{2};\frac{1}{2}+2r;-\frac{w^2}{4},-\frac{w^2x}{4}\right)&=\frac{1}{2}\Big(\cos(w\sqrt{2x})P-\sin(w\sqrt{2x})Q\nonumber\\
&\qquad\qquad +e^{-\frac{1}{4}w^2}R\Big),
\end{align*}
For small values of $x$, $|\arg(x)|<\frac{\pi}{4}$, one can use \cite[Theorem 1.13 (i)]{DixAas} to get the asymptotics for the series on left-hand side of the above equation.
\end{remark}

The transformation formula for the logarithm of the Dedekind eta function \cite[p.~253]{lnb} was proved by Berndt, Lee and Sohn in \cite[p.~28, Entry 3.3]{BerndtLeeSohn} by using \eqref{ramguinand}.
\begin{theorem}\label{ramFormula}
Let $a$ and $b$ be positive numbers such that $ab=\pi^2$. Then
\begin{equation}\label{ramformula}
\sum_{n=1}^\infty \sigma_{-1}(n)e^{-2na}-\sum_{n=1}^\infty \sigma_{-1}(n)e^{-2nb}=\frac{b-a}{12}+\frac{1}{4}\log\frac{a}{b}.
\end{equation}
\end{theorem}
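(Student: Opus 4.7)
The plan is to derive \eqref{ramformula} by specializing the classical Ramanujan--Guinand formula \eqref{ramguinand} at $z=1$. Setting $z=1$, the Bessel function $K_{z/2}$ becomes $K_{1/2}$, which by \eqref{khalf} equals $\sqrt{\pi/(2x)}\,e^{-x}$. A quick substitution shows that $\sqrt{a}\,\sigma_{-1}(n)\,n^{1/2}\,K_{1/2}(2na)=\tfrac{1}{2}\sqrt{\pi}\,\sigma_{-1}(n)\,e^{-2na}$, since the $\sqrt{a}$ on the outside combines with the factor $\sqrt{1/(4na)}$ coming from $K_{1/2}$ to give a constant independent of $a$ and $n$. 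Consequently the entire left-hand side of \eqref{ramguinand} at $z=1$ collapses to $\tfrac{1}{2}\sqrt{\pi}\bigl(\sum_{n=1}^{\infty}\sigma_{-1}(n)e^{-2na}-\sum_{n=1}^{\infty}\sigma_{-1}(n)e^{-2nb}\bigr)$, which is $\tfrac{1}{2}\sqrt{\pi}$ times the left side of \eqref{ramformula}.

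The right-hand side of \eqref{ramguinand} is not immediately defined at $z=1$ because $\zeta(z)$ has a simple pole there. However, $b^{(1-z)/2}-a^{(1-z)/2}$ vanishes to first order at $z=1$, so the first bracketed term has a removable singularity. Expanding $\zeta(z)=\tfrac{1}{z-1}+\gamma+O(z-1)$ and $b^{(1-z)/2}-a^{(1-z)/2}=\tfrac{1}{2}(z-1)\log(a/b)+O((z-1)^2)$, and using $\Gamma(1/2)=\sqrt{\pi}$, one gets $\lim_{z\to 1}\Gamma(z/2)\zeta(z)\bigl\{b^{(1-z)/2}-a^{(1-z)/2}\bigr\}=\tfrac{1}{2}\sqrt{\pi}\,\log(a/b)$. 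The second bracketed term is already regular at $z=1$: with $\Gamma(-1/2)=-2\sqrt{\pi}$ and $\zeta(-1)=-1/12$, it evaluates to $\tfrac{1}{6}\sqrt{\pi}\,(b-a)$. Hence the right-hand side of \eqref{ramguinand} at $z=1$ equals $\tfrac{1}{8}\sqrt{\pi}\,\log(a/b)+\tfrac{1}{24}\sqrt{\pi}\,(b-a)$, and dividing through by $\tfrac{1}{2}\sqrt{\pi}$ recovers \eqref{ramformula}.

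The only substantive point requiring care is the interchange of $\lim_{z\to 1}$ with the infinite sums on the left. Since, for $z$ in a small complex neighborhood of $1$, the Bessel function $K_{z/2}(2nx)$ decays exponentially in $n$ uniformly in $z$ (by the standard asymptotic $K_\nu(y)\sim\sqrt{\pi/(2y)}\,e^{-y}$), dominated convergence permits term-by-term evaluation. Equivalently, \eqref{ramguinand} is an identity of meromorphic functions in $z\in\mathbb{C}$ whose apparent pole at $z=1$ is removable, so the value at $z=1$ of the left-hand side is forced by the finite limit computed on the right. This analytic justification is the main technical obstacle; once it is in place, the Laurent-expansion calculations above deliver \eqref{ramformula} essentially immediately.
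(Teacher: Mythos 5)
Your proof is correct and takes essentially the same route the paper intends: Theorem \ref{ramFormula} is obtained by letting $z\to 1$ in the Ramanujan--Guinand formula \eqref{ramguinand}, using \eqref{khalf} to collapse the Bessel sums and a Laurent expansion to handle the removable singularity from the pole of $\zeta(z)$ --- exactly the Berndt--Lee--Sohn derivation the paper cites, and the $w=0$ instance of the limit argument the paper itself performs in Section 4 for the generalized formula. All your constants ($\tfrac{1}{2}\sqrt{\pi}$ on the left, $\tfrac{1}{8}\sqrt{\pi}\log(a/b)+\tfrac{1}{24}\sqrt{\pi}(b-a)$ on the right) check out.
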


As an application of our Theorem \ref{K_(half,w)}, we obtained the following  generalization of \eqref{ramformula}.
\begin{theorem}\label{genOfRamanujanFormula}
Let $\Phi_3(a;c;x,y)$ be defined in \eqref{humbert3}. 
Let $a$ and $b$ be positive numbers such that $ab=\pi^2$. Then
\begin{align}\label{eqngenOfRamanujanFormula}
&e^{-\frac{w^2}{4}}\sum_{n=1}^\infty \sum_{r=0}^{\infty}\frac{\sigma_{-1}(n)e^{-2na}}{r!\left(\frac{1}{2}\right)_r\left(\frac{1}{2}\right)_{2r}}\Phi_3\left(\frac{1}{2};\frac{1}{2}+2r;\frac{w^2}{4},\frac{w^2n a}{2}\right)\left(-\frac{wna}{4}\right)^{2r}\nonumber\\
&\qquad- e^{\frac{w^2}{4}}\sum_{n=1}^\infty \sum_{r=0}^{\infty}\frac{\sigma_{-1}(n)e^{-2nb}}{r!\left(\frac{1}{2}\right)_r\left(\frac{1}{2}\right)_{2r}}\Phi_3\left(\frac{1}{2};\frac{1}{2}+2r;-\frac{w^2}{4},-\frac{w^2n b}{2}\right)\left(\frac{wnb}{4}\right)^{2r}\nonumber\\
&=\frac{1}{4}\left[\log\frac{a}{b}-\frac{w^2}{2}\left({}_2F_2\left(1,1;\frac{3}{2},2;\frac{w^2}{4}\right)+{}_2F_2\left(1,1;\frac{3}{2},2;-\frac{w^2}{4}\right)\right)\right] \nonumber \\
&\qquad\qquad +\frac{1}{12}\left[b\left(1+\frac{w\sqrt{\pi}}{2}e^{\frac{w^2}{4}}\mathrm{erf}\left(\frac{w}{2}\right)\right)-a\left(1-\frac{w\sqrt{\pi}}{2}e^{-\frac{w^2}{4}}\mathrm{erfi}\left(\frac{w}{2}\right)\right)\right],
\end{align}
\end{theorem}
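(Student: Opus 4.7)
The plan is to derive \eqref{eqngenOfRamanujanFormula} as the $z = 1$ limit of the generalized Ramanujan--Guinand formula \eqref{genRamGuin}, converting the modified Bessel kernels on the left via Theorem~\ref{K_(half,w)} and carefully handling the limit on the right. On the left of \eqref{genRamGuin}, at $z = 1$ the kernels read $K_{\frac12,iw}(2na)$ and $K_{\frac12,w}(2nb)$; applying Theorem~\ref{K_(half,w)} with the substitutions $w \mapsto iw$, $x = 2na$ (respectively $w$, $x = 2nb$) expresses each as $\sqrt{\pi/(2x)}\,e^{-x}$ times a convergent $\Phi_3$-series. Multiplying by $\sqrt{a}\,\sigma_{-1}(n)\sqrt{n}$ (resp.\ $\sqrt{b}\,\sigma_{-1}(n)\sqrt{n}$) collapses the radicals $\sqrt{\pi/(4na)}$, $\sqrt{\pi/(4nb)}$ to the common constant $\sqrt{\pi}/2$, producing the two double sums on the left of \eqref{eqngenOfRamanujanFormula} up to that overall prefactor, with the $e^{\mp w^2/4}$ factors already supplied by \eqref{genRamGuin}.

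The right-hand side of \eqref{genRamGuin} requires two separate limit computations. The $\Gamma(z/2)\zeta(z)$ block is a $0 \cdot \infty$ indeterminate form at $z = 1$: $\zeta(z)$ has a simple pole there, while
\begin{equation*}
f(z) := b^{(1-z)/2}\,{}_1F_1\!\left(\tfrac{1-z}{2};\tfrac12;\tfrac{w^2}{4}\right) - a^{(1-z)/2}\,{}_1F_1\!\left(\tfrac{1-z}{2};\tfrac12;-\tfrac{w^2}{4}\right)
\end{equation*}
vanishes at $z = 1$ because ${}_1F_1(0;\tfrac12;u) = 1$. A Taylor expansion reduces the limit to $\tfrac{\sqrt{\pi}}{4} f'(1)$, and computing $f'(1)$ relies on the identity
\begin{equation*}
\left.\frac{\partial}{\partial a}\,{}_1F_1\!\left(a;\tfrac12;u\right)\right|_{a=0} = \sum_{m \geq 1} \frac{u^m}{m\,(\tfrac12)_m} = 2u\,{}_2F_2\!\left(1,1;\tfrac32,2;u\right),
\end{equation*}
where the middle equality comes from differentiating \eqref{1f1} termwise using $\partial_a (a)_m|_{a=0} = (m-1)!$ for $m \geq 1$, and the last from rewriting $1/(m+1) = (1)_m/(2)_m$. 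Substituting $u = \pm w^2/4$ and dividing by $\sqrt{\pi}/2$ produces precisely the $\log(a/b)$ and ${}_2F_2$ terms in the statement.

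The $\Gamma(-z/2)\zeta(-z)$ block is regular at $z = 1$, contributing the prefactor $\tfrac14\Gamma(-\tfrac12)\zeta(-1) = \sqrt{\pi}/24$ times $b\,{}_1F_1(1;\tfrac12;\tfrac{w^2}{4}) - a\,{}_1F_1(1;\tfrac12;-\tfrac{w^2}{4})$. The closed form needed is
\begin{equation*}
{}_1F_1\!\left(1;\tfrac12;\tfrac{x^2}{4}\right) = \sum_{m \geq 0} \frac{m!\,x^{2m}}{(2m)!} = \int_0^\infty e^{-t}\cosh(x\sqrt{t})\,dt = 1 + \tfrac{x\sqrt{\pi}}{2}\,e^{x^2/4}\mathrm{erf}(x/2),
\end{equation*}
proved by the substitution $t = u^2$ followed by completing the square; the partner ${}_1F_1(1;\tfrac12;-\tfrac{w^2}{4}) = 1 - \tfrac{w\sqrt{\pi}}{2}\,e^{-w^2/4}\mathrm{erfi}(w/2)$ then follows from the $x = iw$ case combined with $\mathrm{erf}(iz) = i\,\mathrm{erfi}(z)$. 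Dividing by $\sqrt{\pi}/2$ matches the remaining $(b-a)/12$--type bracket of \eqref{eqngenOfRamanujanFormula}.

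The principal technical concerns are (i) interchanging $\lim_{z \to 1}$ with the infinite sums over $n$ on the left, which is justified by the absolute convergence of the $\Phi_3$-series from Theorem~\ref{K_(half,w)} together with the exponential damping $e^{-2na}$, $e^{-2nb}$, and (ii) the identification of $\partial_a{}_1F_1(a;\tfrac12;u)|_{a=0}$ with $2u\,{}_2F_2(1,1;\tfrac32,2;u)$, which is the one nontrivial algebraic rearrangement in the argument. Everything else is bookkeeping of the $\sqrt{\pi}/2$ normalization and routine comparison of the two sides.
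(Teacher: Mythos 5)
Your proposal is correct and follows essentially the same route as the paper: take $z\to 1$ in Theorem~\ref{DixRamGuin}, convert the left-hand side via Theorem~\ref{K_(half,w)} (with $w\mapsto iw$, $x=2na$ and $x=2nb$), resolve the $\Gamma(z/2)\zeta(z)$ block as a $0\cdot\infty$ form using the derivative of ${}_1F_1\left(\frac{1-z}{2};\frac12;\pm\frac{w^2}{4}\right)$ at $z=1$, and evaluate the regular $\Gamma(-z/2)\zeta(-z)$ block with the closed forms \eqref{1f1erf}--\eqref{1f1erfi}. The only difference is cosmetic: you derive the two auxiliary identities (the ${}_2F_2$ derivative formula and the error-function evaluation of ${}_1F_1\left(1;\frac12;\pm\frac{w^2}{4}\right)$) from scratch, whereas the paper cites them from \cite{DixitAnalogue} and \cite{nist}.
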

\noindent
 where, $\mathrm{erf}(z)$ and $\mathrm{erfi}(z)$ are \emph{error function} and \emph{imaginary error function} respectively, defined by \cite[p. 275]{temme}
 \begin{align}\label{erf}
 \mathrm{erf}(w)=\frac{2}{\sqrt{\pi}}\int_0^w e^{-t^2}\mathrm{d}t,
 \end{align}
 and \cite[p. 32]{janke}\footnote{Factor $2/\sqrt{\pi}$ is missed from the definition in \cite{janke}. }
 \begin{align}\label{erfi}
 \mathrm{erfi}(w)=\frac{2}{\sqrt{\pi}}\int_0^w e^{t^2}\mathrm{d}t.
 \end{align}

\begin{remark}
	By letting $w=0$ and noting that
	\begin{align*}
	\textrm{erf}(0)=0=\textrm{erfi}(0),\
	{}_2F_2(a,b;c,d,0)=1,\
	\Phi_3(b;c;0,0)=1
	\end{align*}
	in Theorem \ref{genOfRamanujanFormula} we get  \eqref{ramformula}. 
\end{remark}

\section{Preliminaries}

Stirling's formula for $\Gamma(s), s=\sigma+it$, in a vertical strip $a \leq\sigma\leq b$ is given by 
\begin{align}\label{stirling}
|\Gamma(s)|=(2\pi)^{\frac{1}{2}}|t|^{\sigma-\frac{1}{2}}e^{-\frac{1}{2}\pi|t|}\left(1+O\left(\frac{1}{|t|}\right)\right)
\end{align}
as $|t|\rightarrow\infty$. The Duplication formula and reflection formula  for the Gamma function are respectively given by \cite[p. 46, Equation (3.4), (3.5)]{temme}
{\allowdisplaybreaks\begin{align}
\Gamma(s)\Gamma\left(s+\frac{1}{2}\right)=\frac{\sqrt{\pi}}{2^{2s-1}}\Gamma(2s). \label{duplication}\\
\Gamma(s)\Gamma(1-s)=\frac{\pi}{\sin(\pi s)},\ s\notin\mathbb{Z}, \label{reflection}
\end{align} }%
Kummer's first transformation \cite[p.173, Equation 7.5]{temme} for the confluent hypergeometric function is given by 
\begin{align}\label{kummer}
{}_1F_1(a;c;z)=e^z{}_1F_1(c-a;c;-z).
\end{align}


We now prove a lemma, which will be used to prove Theorem \ref{K_(half,w)}. This result can be obtained from \cite[p. 332, \textbf{Formula (13.10.10)}]{nist}, however, we derive it here so as to make the paper self-contained.
\begin{lemma}\label{InverseMellinTransform}
	Let $x\in\mathbb{C}\backslash\{x\in\mathbb{R},\ x\leq 0\}$ and $n\in\mathbb{N}\cup \{0\}$. Then For $\frac{1}{2}<c:=\mathrm{Re}(s)<1$, we have
	\begin{align}\label{InverseMellinTransformResult}
	\frac{1}{2\pi i} \int_{(c)}\frac{\Gamma\left(s-\frac{1}{2}\right)\Gamma(s+n)}{\Gamma(s)}x^{-s}ds=(-1)^n\sqrt{\frac{\pi}{x}}\frac{{}_1F_1\left(\frac{1}{2};\frac{1}{2}-n;-x\right)}{\Gamma\left(\frac{1}{2}-n\right)}.
	\end{align}
	Thus, for real $x$,
	\begin{align}\label{eqv}
	\int_{0}^{\infty} {}_1F_1\left(\frac{1}{2};\frac{1}{2}-n;-x\right)x^{s-\frac{3}{2}}dx=\frac{(-1)^n}{\sqrt{\pi}}\frac{\Gamma\left(\frac{1}{2}-n\right)\Gamma\left(s-\frac{1}{2}\right)\Gamma(s+n)}{\Gamma(s)}.
	\end{align}
\end{lemma}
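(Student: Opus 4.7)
The plan is to evaluate the Mellin--Barnes integral in \eqref{InverseMellinTransformResult} by the method of residues, closing the contour to the left, and then to read off \eqref{eqv} by Mellin inversion.

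First I would locate the poles of the integrand $F(s):=\Gamma(s-\tfrac12)\Gamma(s+n)x^{-s}/\Gamma(s)$. The factor $\Gamma(s-\tfrac12)$ contributes simple poles at $s=\tfrac12-k$ for $k\ge 0$. The factor $\Gamma(s+n)$ has simple poles at $s=-n-k$ for $k\ge 0$, but at every such point $1/\Gamma(s)$ has a simple zero (since $-n-k$ is a non-positive integer), so those singularities are removable. Hence the only poles of $F$ lying to the left of the contour are the simple poles at $s=\tfrac12-k$ for $k\in\mathbb{N}\cup\{0\}$. Stirling's formula \eqref{stirling} shows that on a vertical line with bounded real part the gamma-quotient $\Gamma(s-\tfrac12)\Gamma(s+n)/\Gamma(s)$ decays like $|t|^{\sigma+n-1}e^{-\pi|t|/2}$, which together with the estimate $|x^{-s}|=|x|^{-\sigma}e^{t\arg x}$ (on the principal branch, since $x\notin(-\infty,0]$) permits shifting the contour past $s=\tfrac12-N$ for every $N$ and applying the residue theorem; the horizontal pieces vanish in the usual way.

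Next I would sum the residues. Using $\mathrm{Res}_{s=\tfrac12-k}\Gamma(s-\tfrac12)=(-1)^k/k!$, the residue of $F$ at $s=\tfrac12-k$ equals
\begin{equation*}
\frac{(-1)^k}{k!}\,\frac{\Gamma(\tfrac12-k+n)}{\Gamma(\tfrac12-k)}\,x^{k-1/2}.
\end{equation*}
Two applications of the reflection formula \eqref{reflection}, at $s=\tfrac12+k$ and at $s=\tfrac12+k-n$, collapse the ratio via $\sin(\pi(\tfrac12+k))=(-1)^k$ and $\sin(\pi(\tfrac12+k-n))=(-1)^{k+n}$ to give
\begin{equation*}
\frac{\Gamma(\tfrac12-k+n)}{\Gamma(\tfrac12-k)}=(-1)^n\,\frac{\Gamma(\tfrac12+k)}{\Gamma(\tfrac12+k-n)}=(-1)^n\,\frac{\sqrt{\pi}}{\Gamma(\tfrac12-n)}\,\frac{(\tfrac12)_k}{(\tfrac12-n)_k}.
\end{equation*}
Summing over $k\ge 0$ and recognizing the power series \eqref{1f1} then produces the right-hand side of \eqref{InverseMellinTransformResult}.

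Finally, \eqref{eqv} is an immediate consequence of \eqref{InverseMellinTransformResult} via Mellin inversion: \eqref{InverseMellinTransformResult} identifies $(-1)^n\sqrt{\pi/x}\,{}_1F_1(\tfrac12;\tfrac12-n;-x)/\Gamma(\tfrac12-n)$ as the inverse Mellin transform of the gamma quotient on the strip $\tfrac12<\mathrm{Re}(s)<1$, so the direct Mellin transform of $x^{-1/2}{}_1F_1(\tfrac12;\tfrac12-n;-x)$ returns the gamma quotient itself, and clearing the constants $(-1)^n$, $\sqrt{\pi}$, $\Gamma(\tfrac12-n)$ yields \eqref{eqv}; absolute convergence of both members on the indicated strip is again checked by Stirling \eqref{stirling}. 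The only genuinely delicate point is the gamma manipulation in the residue step: converting $\Gamma(\tfrac12-k+n)/\Gamma(\tfrac12-k)$ into the Pochhammer ratio $(\tfrac12)_k/(\tfrac12-n)_k$ and producing the overall sign $(-1)^n$ requires careful bookkeeping in the two applications of the reflection formula, whereas the pole analysis and contour shift are routine once Stirling is in hand.
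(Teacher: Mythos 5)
Your proposal follows essentially the same route as the paper: close the contour to the left, pick up the simple poles of $\Gamma\left(s-\tfrac12\right)$ at $s=\tfrac12-k$ (the poles of $\Gamma(s+n)$ being cancelled by the zeros of $1/\Gamma(s)$), simplify the residues with the reflection formula, and sum to recognize the ${}_1F_1$ series; your Pochhammer bookkeeping and the overall sign $(-1)^n$ are correct, and your treatment of \eqref{eqv} by Mellin inversion is what the paper does as well.

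The one point where your argument is thinner than the paper's is the passage from finitely many residues to the full residue series. Shifting the contour past $s=\tfrac12-N$ leaves a remainder integral along the receding vertical line, and Stirling decay in $|t|$ on a \emph{fixed} vertical line does not by itself show that this remainder tends to zero as $N\to\infty$; you assert the shift "for every $N$" and then sum over all $k\ge 0$ without closing this gap. The paper handles it by first assuming $0<|x|<1$, bounding the left-edge integral by $O\left(|x|^{N+\frac12}\right)$ so that it vanishes in the limit, and then removing the restriction on $x$ by analytic continuation. You would need to supply some such argument (or exploit the factorial decay of $\Gamma(-N-1+it)$ in $N$, after writing the gamma quotient as $\Gamma\left(s-\tfrac12\right)\prod_{j=0}^{n-1}(s+j)$) to justify equating the original integral with the infinite sum of residues.
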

	\begin{proof} The integral on the right-hand side of \eqref{eqv} is convergent for $\frac{1}{2}<\mathrm{Re}(s)<1$. This can be seen by the following argument. From \cite[p. 189, Exercise 7.7]{temme}, we have
		{\allowdisplaybreaks	\begin{align}\label{asymptoteof1F1}
			\frac{1}{\Gamma(c)}{}_1F_1(a;c;z)\sim& \frac{e^zz^{a-c}}{\Gamma(a)}\sum_{n=0}^{\infty}\frac{(c-a)_n(1-a)_n}{n!}z^{-n}\nonumber\\
			&\quad+\frac{e^{\pm\pi ia}z^{-a}}{\Gamma(c-a)}\sum_{n=0}^{\infty}\frac{(a)_n(1+a-c)_n}{n!}(-z)^{-n},
			\end{align}}%
		where the upper sign is taken if $-\frac{1}{2}\pi<\arg (z)<\frac{3}{2}\pi$ and the lower sign if $-\frac{3}{2}\pi<\arg (z)<\frac{1}{2}\pi$. The first part is dominant when Re$(z)>0$; the second part becomes dominant when Re$(z)<0$. In our case we have $z=x>0$ therefore second part of \eqref{asymptoteof1F1} is dominant as $-x<0$, i.e., 
		\begin{align*}
		\frac{1}{\Gamma(c)}{}_1F_1(a;c;-x)\sim \frac{e^{\pi ia}(-1)^ax^{-a}}{\Gamma(c-a)},\ \mathrm{as}\ x\rightarrow \infty,
		\end{align*} 
		and we know that ${}_1F_1(a;c;-x)\sim 1 ,\ \mathrm{as}\ x\rightarrow 0 $. Hence by these two results it is clear that integral $\displaystyle\int_{0}^{\infty}\frac{ {}_1F_1\left(\frac{1}{2};\frac{1}{2}-n;-x\right)}{\sqrt{x}}x^{s-1}ds$ is convergent for $\frac{1}{2}<$Re$(s)<1$.
		
	    We will now prove \eqref{InverseMellinTransformResult}. First assume $0<|x|<1$. Consider the contour formed by the line segments $[-M-\frac{1}{2}-iT,c-iT],\ [c-iT,c+iT],\ \left[c+iT,-M-\frac{1}{2}+iT\right]$ and \newline$\left[-M-\frac{1}{2}+iT,-M-\frac{1}{2}-iT \right]$, where $M$ is a positive integer. It can be easily seen that integrand $\displaystyle\frac{\Gamma\left(s-\frac{1}{2}\right)\Gamma(s+n)}{\Gamma(s)}x^{-s}$ has poles at $s=\frac{1}{2}-m,\ m\in\mathbb{Z},\ 0\leq m<M$, inside the contour because of the poles of $\Gamma\left(s-\frac{1}{2}\right)$. Note that the poles of $\Gamma(s+n)$ at $s=-n,\ n\in\mathbb{Z},\ n\ge 0$ are cancelled by the poles of $\Gamma(s)$. Therefore by the Cauchy residue theorem,
		\begin{align}\label{cauchy1}
		&\left[\int_{-M-\frac{1}{2}-iT}^{c-iT}+\int_{c-iT}^{c+iT}+\int_{c+iT}^{-M-\frac{1}{2}+iT}+\int_{-M-\frac{1}{2}+iT}^{-M-\frac{1}{2}-iT}\right]\frac{\Gamma\left(s-\frac{1}{2}\right)\Gamma(s+n)}{\Gamma(s)}x^{-s} ds\nonumber\\
		&\hspace{1cm}=2\pi i \sum_{m=0}^{M}R_{\frac{1}{2}-m},
		\end{align}
		where $R_{\frac{1}{2}-m}$ is the residue of the integrand at $s=\frac{1}{2}-m$. Now
		\begin{align}\label{residue1}
		R_{\frac{1}{2}-m}&=\lim\limits_{s\rightarrow \frac{1}{2}-m}\left(s-\left(\frac{1}{2}-m\right)\right)\frac{\Gamma\left(s-\frac{1}{2}\right)\Gamma(s+n)}{\Gamma(s)}x^{-s}\nonumber\\
		&=\frac{(-1)^m}{m!}\frac{\Gamma\left(n-m+\frac{1}{2}\right)}{\Gamma\left(\frac{1}{2}-m\right)}x^{-\left(\frac{1}{2}-m\right)}.
		\end{align}
		By the application of Stirling's formula for Gamma function \eqref{stirling}, we can see that the integrals along horizontal segments go to zero as $T\rightarrow \infty$. So by \eqref{cauchy1} and \eqref{residue1}, we have
		\begin{align}\label{imt1}
		\left[\int_{c-i\infty}^{c+i\infty}+\int_{-M-\frac{1}{2}+i\infty}^{-M-\frac{1}{2}-i\infty}\right]\frac{\Gamma\left(s-\frac{1}{2}\right)\Gamma(s+n)}{\Gamma(s)}x^{-s} ds=2\pi i \sum_{m=0}^{M}\frac{(-1)^{m}x^{m-\frac{1}{2}}}{m!}\frac{\Gamma\left(n-m+\frac{1}{2}\right)}{\Gamma\left(\frac{1}{2}-m\right)}.
		\end{align}
		Next,
{\allowdisplaybreaks	\begin{align}
		&\left|\int_{-M-\frac{1}{2}+i\infty}^{-M-\frac{1}{2}-i\infty}\frac{\Gamma\left(s-\frac{1}{2}\right)\Gamma(s+n)}{\Gamma(s)}x^{-s} ds\right| \nonumber\\
		&\qquad=\left|i\int_{-\infty}^{\infty}\frac{\Gamma\left(-M-1+it\right)\Gamma\left(-M+n-\frac{1}{2}+it\right)}{\Gamma\left(-M-\frac{1}{2}+it\right)} x^{M+\frac{1}{2}-it}dt\right| \nonumber\\
		&\qquad =|x|^{M+\frac{1}{2}}\int_{-1}^{1}O(1)dt+|x|^{M+\frac{1}{2}}\int_{1}^{\pm\infty}O\left(|t|^{-M-\frac{3}{2}+n}e^{-\frac{\pi}{2}|t|}\right)dt \nonumber\\
		&\qquad=O\left(|x|^{M+\frac{1}{2}}\right)\nonumber.
		\end{align}}
		Since $|x|<1$, this implies that 
		\begin{align}\label{imt2}
		\lim\limits_{M\rightarrow\infty}\int_{-M-\frac{1}{2}+i\infty}^{-M-\frac{1}{2}+i\infty}\frac{\Gamma\left(s-\frac{1}{2}\right)\Gamma(s+n)}{\Gamma(s)}x^{-s} ds=0,
		\end{align}
		From \eqref{imt1} and \eqref{imt2}, we have
{\allowdisplaybreaks\begin{align}
		\frac{1}{2\pi i}&\int_{c-i\infty}^{c+i\infty}\frac{\Gamma\left(s-\frac{1}{2}\right)\Gamma(s+n)}{\Gamma(s)}x^{-s} ds\nonumber\\
		&=\sum_{m=0}^{\infty}\frac{(-1)^{m}x^{m-\frac{1}{2}}}{m!}\frac{\Gamma\left(n-m+\frac{1}{2}\right)}{\Gamma\left(\frac{1}{2}-m\right)}. \nonumber\\
		&=\sum_{m=0}^{\infty}\frac{(-1)^{m}x^{m-\frac{1}{2}}}{m!}\frac{\Gamma\left(\frac{1}{2}+m\right)\cos\left(\pi m\right)}{\Gamma\left(\frac{1}{2}+m-n\right)\cos(\pi(m-n))} \nonumber\\
		&=\sum_{m=0}^{\infty}\frac{(-1)^{m}x^{m-\frac{1}{2}}}{m!}\frac{\Gamma\left(\frac{1}{2}+m\right)(-1)^m}{\Gamma\left(\frac{1}{2}+m-n\right)(-1)^{m-n}} \nonumber\\
		&={\sqrt{\frac{\pi} {x}}} \sum_{m=0}^{\infty}\frac{(-1)^{m}x^{m}}{m!}\frac{\Gamma\left(\frac{1}{2}+m\right)}{\Gamma\left(\frac{1}{2}\right)}\frac{(-1)^n\Gamma\left(\frac{1}{2}-n\right)}{\Gamma\left(\frac{1}{2}-n\right)\Gamma\left(\frac{1}{2}-n+m\right)} \nonumber\\
		&=(-1)^n{\sqrt{\frac{\pi} {x}}}\frac{{}_1F_1\left(\frac{1}{2};\frac{1}{2}-n;-x\right)}{\Gamma\left(\frac{1}{2}-n\right)},\nonumber
		\end{align}}%
		where, in the second step, we used the reflection formula \eqref{reflection}. As both sides of \eqref{InverseMellinTransformResult} are analytic as functions of $x$, by the principle of the analytic continuation, our result is true for all $x\in\mathbb{C}\backslash\{x\in\mathbb{R},\ x\leq 0\}.$ 
			\end{proof}

\section{Proof of Theorem \ref{K_(half,w)}}

\begin{proof}
The series in the statement of the Theorem is convergent as $\Phi_3\left(\frac{1}{2};\frac{1}{2}+2r;-\frac{w^2}{4},-\frac{w^2}{4}\right)$ tends to 1 as $r\rightarrow \infty$. Let $z=\frac{1}{2}$ in \eqref{defKzw} then for $c:=$Re$(s)>1/2$, we have
\begin{align*}
K_{\frac{1}{2},w}(x)&=\frac{1}{2\pi i}\int_{(c)}\Gamma\left(\frac{s}{2}-\frac{1}{4}\right)\Gamma\left(\frac{s}{2}+\frac{1}{4}\right){}_1F_1\left(\frac{s}{2}-\frac{1}{4};\frac{1}{2};\frac{-w^2}{4}\right){}_1F_1\left(\frac{s}{2}+\frac{1}{4};\frac{1}{2};\frac{-w^2}{4}\right)\nonumber\\
&\hspace{7cm}\times 2^{s-2}x^{-s}\mathrm{d}s.
\end{align*}
Now use \eqref{duplication} in above equation to arrive at
\begin{align}\label{khalf1}
K_{\frac{1}{2},w}(x)&=\frac{1}{2\pi i}\int_{(c)}\sqrt{\frac{\pi}{2}}\Gamma\left(s-\frac{1}{2}\right){}_1F_1\left(\frac{s}{2}-\frac{1}{4};\frac{1}{2};\frac{-w^2}{4}\right){}_1F_1\left(\frac{s}{2}+\frac{1}{4};\frac{1}{2};\frac{-w^2}{4}\right)x^{-s}\mathrm{d}s.
\end{align}
From \cite[Equation (72)]{chaundy},
\begin{align}\label{chaundy1}
{}_1F_1(a;c;x){}_1F_1(a';c;x)=\sum_{r=0}^{\infty}\frac{(a)_r(a')_r}{r!(c)_r(c)_{2r}}x^{2r}{}_1F_1(a+a'+2r;c+2r;x).
\end{align}
Use \eqref{khalf1} and \eqref{chaundy1} to see that
{\allowdisplaybreaks\begin{align}\label{I}
K_{\frac{1}{2},w}(x)&=\sqrt{\frac{\pi}{2}}\frac{1}{2\pi i}\int_{(c)}\Gamma\left(s-\frac{1}{2}\right)\sum_{r=0}^{\infty}\frac{\left(\frac{s}{2}-\frac{1}{4}\right)_r\left(\frac{s}{2}+\frac{1}{4}\right)_r}{r!\left(\frac{1}{2}\right)_r \left(\frac{1}{2}\right)_{2r}}{}_1F_1\left(s+2r;\frac{1}{2}+2r;-\frac{w^2}{4}\right)\nonumber\\
&\qquad\qquad\qquad\times \left(w^4/16\right)^r x^{-s} \mathrm{d}s \nonumber\\
&=\sqrt{\frac{\pi}{2}}\frac{1}{2\pi i}\int_{(c)}\Gamma\left(s-\frac{1}{2}\right){}_1F_1\left(s;\frac{1}{2};\frac{-w^2}{4}\right) x^{-s} \mathrm{d}s \nonumber\\
&\quad+\sqrt{\frac{\pi}{2}}\frac{1}{2\pi i}\int_{(c)}\Gamma\left(s-\frac{1}{2}\right)\sum_{r=1}^{\infty}\frac{\left(\frac{s}{2}-\frac{1}{4}\right)_r\left(\frac{s}{2}+\frac{1}{4}\right)_r}{r!\left(\frac{1}{2}\right)_r \left(\frac{1}{2}\right)_{2r}}{}_1F_1\left(s+2r;\frac{1}{2}+2r;\frac{-w^2}{4}\right)\nonumber\\
&\qquad\qquad\qquad\quad\times\left(w^4/16\right)^r x^{-s} \mathrm{d}s \nonumber\\
&=\sqrt{\frac{\pi}{2}}(I_1+I_2),
\end{align}}%
where 
\begin{align}\label{I1}
I_1:=\frac{1}{2\pi i}\int_{(c)}\Gamma\left(s-\frac{1}{2}\right){}_1F_1\left(s;\frac{1}{2};\frac{-w^2}{4}\right) x^{-s} \mathrm{d}s,
\end{align}
and 
{\allowdisplaybreaks\begin{align}\label{I2}
I_2:=\frac{1}{2\pi i}\int_{(c)}\Gamma\left(s-\frac{1}{2}\right)\sum_{r=1}^{\infty}\frac{\left(\frac{s}{2}-\frac{1}{4}\right)_r\left(\frac{s}{2}+\frac{1}{4}\right)_r}{r!\left(\frac{1}{2}\right)_r \left(\frac{1}{2}\right)_{2r}}{}_1F_1\left(s+2r;\frac{1}{2}+2r;\frac{-w^2}{4}\right)\left(\frac{w^4}{16}\right)^r x^{-s} \mathrm{d}s.
\end{align}}%
We first evaluate $I_1$. To do so, we use the series definition of confluent hypergeometric function in \eqref{I1} and interchange the order of summation and integration because of absolute convergence to deduce that
{\allowdisplaybreaks\begin{align}\label{I11}
I_1&=\sum_{n=0}^{\infty}\frac{(-w^2/4)^n}{(1/2)_n n!} \frac{1}{2\pi i}\int_{(c)}\frac{\Gamma\left(s-\frac{1}{2}\right)\Gamma(s+n)}{\Gamma(s)}x^{-s} ds \nonumber\\
&=\sum_{n=0}^{\infty}\frac{(-w^2/4)^n}{(1/2)_n n!}\left((-1)^n\sqrt{\frac{\pi}{x}}\frac{{}_1F_1\left(\frac{1}{2};\frac{1}{2}-n;-x\right)}{\Gamma\left(\frac{1}{2}-n\right)}\right) \nonumber\\
&=\sqrt{\frac{\pi}{x}}\sum_{n=0}^{\infty} \frac{(-w^2/4)^n (-1)^n}{ n! (1/2)_n \Gamma\left(\frac{1}{2}-n\right)} {}_1F_1\left(\frac{1}{2};\frac{1}{2}-n;-x\right) \nonumber\\
&=\sqrt{\frac{\pi}{x}}e^{-x}\sum_{n=0}^{\infty} \frac{(w^2/4)^n}{ n! (1/2)_n \Gamma\left(\frac{1}{2}-n\right)} {}_1F_1\left(-n;\frac{1}{2}-n;x\right),
\end{align}}%
where in the second step we employed Lemma \ref{InverseMellinTransform} and \eqref{kummer} in the ultimate step .\\
From \cite[p. 152]{temme},
\begin{align}\label{temme1}
L_n^\alpha(x)=\frac{\Gamma(n+\alpha+1)}{\Gamma(n+1)\Gamma(\alpha+1)}{}_1F_1(-n;\alpha+1;x).
\end{align}
Use \eqref{temme1} with $\alpha=-n-\frac{1}{2}$ in \eqref{I11} to arrive at
\begin{align}\label{I12}
I_1=\frac{e^{-x}}{\sqrt{x}}\sum_{n=0}^{\infty} \frac{(w^2/4)^n}{(1/2)_n }L_n^{-n-\frac{1}{2}}(x).
\end{align}
Now \cite[vol. 2, p. 706, \textbf{Formula (11)}]{prudnikov}
\begin{align}\label{prudnikov}
\sum_{k=0}^{\infty}\frac{t^k}{(\beta)_k}L_k^{\alpha-k}(x)=\Phi_3(-\alpha;\beta;-t,-tx).
\end{align}
Use \eqref{prudnikov} with $\beta=\frac{1}{2},\alpha=-\frac{1}{2},t=\frac{w^2}{4}$ so that from \eqref{I12},
\begin{align}\label{I1Final}
I_1=\frac{e^{-x}}{\sqrt{x}}\Phi_3\left(\frac{1}{2};\frac{1}{2};-\frac{w^2}{4},-\frac{w^2}{4}x\right)
\end{align}
Now we evaluate $I_2$. Applying \eqref{duplication} twice in \eqref{I2} then in next step interchanging order of series and integration by using the absolute convergence, we see that
{\allowdisplaybreaks\begin{align*}
I_2&=\sum_{r=1}^{\infty}\frac{\left(\frac{w^4}{16}\right)^r}{r!\left(\frac{1}{2}\right)_r\left(\frac{1}{2}\right)_{2r}}\frac{1}{2\pi i}\int_{(c)}\frac{\sqrt{\pi}}{2^{2r+s-\frac{3}{2}}}\frac{2^{s-\frac{3}{2}}}{\sqrt{\pi}}\frac{\Gamma\left(s-\frac{1}{2}\right)\Gamma\left(s-\frac{1}{2}+2r\right)}{\Gamma\left(s-\frac{1}{2}\right)} \nonumber\\
&\qquad\qquad \times {}_1F_1\left(s+2r,\frac{1}{2}+2r;-\frac{w^2}{4}\right)x^{-s} ds \nonumber\\
&=\sum_{r=1}^{\infty}\frac{\left(\frac{w^4}{16}\right)^r}{2^{2r}r!\left(\frac{1}{2}\right)_r\left(\frac{1}{2}\right)_{2r}}\frac{1}{2\pi i}\int_{(c)} \Gamma\left(s-\frac{1}{2}+2r\right)\sum_{m=0}^{\infty}\frac{\Gamma(s+2r+m)}{\Gamma(s+2r)\left(\frac{1}{2}+2r\right)_m}\frac{\left(\frac{-w^2}{4}\right)^m}{m!}x^{-s} ds \nonumber\\
&=\sum_{r=1}^{\infty}\frac{\left(\frac{w^4}{16}\right)^r}{2^{2r}r!\left(\frac{1}{2}\right)_r\left(\frac{1}{2}\right)_{2r}} \sum_{m=0}^{\infty} \frac{\left(\frac{-w^2}{4}\right)^m}{m!\left(\frac{1}{2}+2r\right)_m}\frac{1}{2\pi i}\int_{(c)} \Gamma\left(s-\frac{1}{2}+2r\right) \frac{\Gamma(s+2r+m)}{\Gamma(s+2r)} x^{-s} ds \nonumber\\
&=\sum_{r=1}^{\infty}\frac{\left(\frac{w^4}{16}\right)^r}{2^{2r}r!\left(\frac{1}{2}\right)_r\left(\frac{1}{2}\right)_{2r}} \sum_{m=0}^{\infty} \frac{\left(\frac{-w^2}{4}\right)^m}{m!\left(\frac{1}{2}+2r\right)_m}\frac{\sqrt{\pi}x^{2r}\sec(m\pi)}{\sqrt{x}\Gamma\left(\frac{1}{2}-m\right)}{}_1F_1\left(\frac{1}{2};\frac{1}{2}-m;-x\right) \nonumber\\
&=\sum_{r=1}^{\infty}\frac{\left(\frac{w^4}{16}\right)^r\sqrt{\pi}x^{2r-\frac{1}{2}}}{2^{2r}r!\left(\frac{1}{2}\right)_r\left(\frac{1}{2}\right)_{2r}} \sum_{m=0}^{\infty} \frac{\left(\frac{w^2}{4}\right)^m}{m!\left(\frac{1}{2}+2r\right)_m}\frac{e^{-x}{}_1F_1\left(-m;\frac{1}{2}-m;x\right)}{\Gamma\left(\frac{1}{2}-m\right)}, \nonumber\\
\end{align*}}%
where in the third step we again employed Lemma \ref{InverseMellinTransform}. Now use \eqref{temme1} with $\alpha=-\frac{1}{2}-m$ in above equation to see that
\begin{align}\label{I2Final}
I_2&=\frac{e^{-x}}{\sqrt{x}}\sum_{r=1}^{\infty}\frac{\left(\frac{w^4}{16}\right)^r\sqrt{\pi}x^{2r}}{2^{2r}r!\left(\frac{1}{2}\right)_r\left(\frac{1}{2}\right)_{2r}} \sum_{m=0}^{\infty} \frac{\left(\frac{w^2}{4}\right)^m}{m!\left(\frac{1}{2}+2r\right)_m} \frac{\Gamma(m+1)\Gamma\left(\frac{1}{2}-m\right)}{\Gamma\left(\frac{1}{2}-m\right)\Gamma\left(\frac{1}{2}\right)} L_m^{-m-\frac{1}{2}}(x) \nonumber\\
&=\frac{e^{-x}}{\sqrt{x}}\sum_{r=1}^{\infty}\frac{\left(\frac{w^4}{16}\right)^rx^{2r}}{2^{2r}r!\left(\frac{1}{2}\right)_r\left(\frac{1}{2}\right)_{2r}} \sum_{m=0}^{\infty} \frac{\left(\frac{w^2}{4}\right)^m}{\left(\frac{1}{2}+2r\right)_m}L_m^{-m-\frac{1}{2}}(x) \nonumber\\
&=\frac{e^{-x}}{\sqrt{x}}\sum_{r=1}^{\infty}\frac{\left(\frac{w^4}{16}\right)^rx^{2r}}{2^{2r}r!\left(\frac{1}{2}\right)_r\left(\frac{1}{2}\right)_{2r}} \Phi_3\left(\frac{1}{2};\frac{1}{2}+2r;-\frac{w^2}{4},-\frac{w^2}{4}x\right),
\end{align}%
where we again used \eqref{prudnikov} with $\alpha=-1/2,\ \beta=1/2+2r,\ t=-w^2/4$ in the last line.\\
Finally, from \eqref{I}, \eqref{I1Final} and \eqref{I2Final}, we have
\begin{align*}
K_{\frac{1}{2},w}(x)&=\sqrt{\frac{\pi}{2x}}e^{-x}\sum_{r=0}^{\infty}\frac{\left(\frac{w^4x^{2}}{64}\right)^r}{r!\left(\frac{1}{2}\right)_r\left(\frac{1}{2}\right)_{2r}} \Phi_3\left(\frac{1}{2};\frac{1}{2}+2r;-\frac{w^2}{4},-\frac{w^2}{4}x\right).
\end{align*}
\end{proof}

\section{Generalization of the transformation formula for the logarithm of the Dedekind eta function}
\begin{proof}
We let $z\rightarrow 1$ in Theorem \ref{DixRamGuin}. Let us concentrate first on the right hand side of \eqref{genRamGuin}.
	\begin{align*}
	&\lim\limits_{z\rightarrow 1} \frac{1}{4}\Gamma\left(\frac{z}{2}\right)\zeta(z)\left\{b^{\frac{1-z}{2}}{}_1F_1\left(\frac{1-z}{2};\frac{1}{2};\frac{w^2}{4}\right)-a^{\frac{1-z}{2}}{}_1F_1\left(\frac{1-z}{2};\frac{1}{2};-\frac{w^2}{4}\right)\right\} \nonumber\\
	&=\lim\limits_{z\rightarrow 1}\frac{1}{4}\Gamma\left(\frac{z}{2}\right)(z-1)\zeta(z)\left(\frac{b^{\frac{1-z}{2}}{}_1F_1\left(\frac{1-z}{2};\frac{1}{2};\frac{w^2}{4}\right)-a^{\frac{1-z}{2}}{}_1F_1\left(\frac{1-z}{2};\frac{1}{2};-\frac{w^2}{4}\right)}{z-1}\right) \nonumber
	\end{align*}
	Observe that the expression in the parenthesis of the above equation is of the form $\frac{0}{0}$ as $z$ tends 1. Therefore, by L'Hopital's rule and the fact that $\lim\limits_{z\rightarrow 1}(z-1)\zeta(z)=1$, we see that
{\allowdisplaybreaks	\begin{align}\label{lim1}
	&\lim\limits_{z\rightarrow 1} \frac{1}{4}\Gamma\left(\frac{z}{2}\right)\zeta(z)\left\{b^{\frac{1-z}{2}}{}_1F_1\left(\frac{1-z}{2};\frac{1}{2};\frac{w^2}{4}\right)-a^{\frac{1-z}{2}}{}_1F_1\left(\frac{1-z}{2};\frac{1}{2};-\frac{w^2}{4}\right)\right\} \nonumber \\
	&=\frac{\Gamma\left(\frac{1}{2}\right)}{4}\lim\limits_{z\rightarrow 1}\bigg[ b^{\frac{1-z}{2}}\frac{\log b}{-2}{}_1F_1\left(\frac{1-z}{2};\frac{1}{2};\frac{w^2}{4}\right)+b^{\frac{1-z}{2}}\frac{d}{dz}{}_1F_1\left(\frac{1-z}{2};\frac{1}{2};\frac{w^2}{4}\right) \nonumber\\
	&\qquad-a^{\frac{1-z}{2}}\frac{d}{dz}{}_1F_1\left(\frac{1-z}{2};\frac{1}{2};-\frac{w^2}{4}\right) - a^{\frac{1-z}{2}} \frac{\log a}{-2} {}_1F_1\left(\frac{1-z}{2};\frac{1}{2};-\frac{w^2}{4}\right)\bigg]  \nonumber\\
	&=\frac{\sqrt{\pi}}{4}\left[-\frac{1}{2}\log b-\frac{w^2}{4}{}_2F_2\left(1,1;\frac{3}{2},2;\frac{w^2}{4}\right)+\frac{1}{2}\log a-\frac{w^2}{4}{}_2F_2\left(1,1;\frac{3}{2},2;-\frac{w^2}{4}\right)\right] \nonumber\\
	&=\frac{\sqrt{\pi}}{8}\left[\log \frac{a}{b} -\frac{w^2}{2}\left({}_2F_2\left(1,1;\frac{3}{2},2;\frac{w^2}{4}\right)+{}_2F_2\left(1,1;\frac{3}{2},2;-\frac{w^2}{4}\right)\right)\right],
	\end{align}}%
	where in the penultimate line we employed \cite[Equation (5.5)]{DixitAnalogue}
	$$\lim\limits_{s\rightarrow 1} \frac{d}{ds}{}_1F_1\left(\frac{1-s}{2};\frac{1}{2};\frac{z^2}{4}\right)=-\frac{1}{4}z^2{}_2F_2\left(1,1;\frac{3}{2},2;\frac{1}{4}z^2\right).$$
	Also,
	\begin{align}\label{lim21}
	&\lim\limits_{z\rightarrow 1} \frac{1}{4}\Gamma\left(-\frac{z}{2}\right)\zeta(-z)\left\{b^{\frac{1+z}{2}}{}_1F_1\left(\frac{1+z}{2};\frac{1}{2};\frac{w^2}{4}\right)-a^{\frac{1+z}{2}}{}_1F_1\left(\frac{1+z}{2};\frac{1}{2};-\frac{w^2}{4}\right)\right\} \nonumber \\
	&=\frac{1}{4}\Gamma\left(-\frac{1}{2}\right)\zeta(-1)\left\{b\ {}_1F_1\left(1,\frac{1}{2};\frac{w^2}{4}\right)-a\  {}_1F_1\left(1;\frac{1}{2};-\frac{w^2}{4}\right)\right\}. 
	\end{align}
From \cite[p. 164, Formula 7.11.4]{nist}, we have
\begin{align*}
\textrm{erf}(z)=\frac{2z}{\sqrt{\pi}}{}_1F_1\left(\frac{1}{2};\frac{3}{2};-z^2\right)
\end{align*}
Use it with $z=\frac{w^2}{4}$ and apply $\eqref{kummer}$ to see
{\allowdisplaybreaks\begin{align*}
{}_1F_1\left(1;\frac{3}{2};\frac{w^2}{4}\right)&=\frac{\sqrt{\pi}}{w}e^{\frac{w^2}{4}}\textrm{erf}\left(\frac{w}{2}\right),
\end{align*}
so that
\begin{align}\label{1f1erf}
{}_1F_1\left(1;\frac{1}{2};\frac{w^2}{4}\right)&=1+\frac{\sqrt{\pi}w}{2}e^{\frac{w^2}{4}}\textrm{erf}\left(\frac{w}{2}\right).
\end{align}}%
From \eqref{erf} and \eqref{erfi} it is clear that $\mathrm{erf}(iz)=i\ \mathrm{erfi}(z)$. Therefore from \eqref{1f1erf}, we have
\begin{align}\label{1f1erfi}
{}_1F_1\left(1;\frac{1}{2};-\frac{w^2}{4}\right)&=1-\frac{\sqrt{\pi}w}{2}e^{-\frac{w^2}{4}}\textrm{erfi}\left(\frac{w}{2}\right).
\end{align}
Since $\Gamma\left(-\frac{1}{2}\right)=-2\sqrt{\pi},\ \zeta(-1)=-\frac{1}{12}$, and  from \eqref{lim21}, \eqref{1f1erf} and \eqref{1f1erfi}, we arrive at
\begin{align}\label{lim2}
&\lim\limits_{z\rightarrow 1} \frac{1}{4}\Gamma\left(-\frac{z}{2}\right)\zeta(-z)\left\{b^{\frac{1+z}{2}}{}_1F_1\left(\frac{1+z}{2};\frac{1}{2};\frac{w^2}{4}\right)-a^{\frac{1+z}{2}}{}_1F_1\left(\frac{1+z}{2};\frac{1}{2};-\frac{w^2}{4}\right)\right\} \nonumber \\
	&=\frac{\sqrt{\pi}}{24}\left[b\left(1+\frac{\sqrt{\pi}w}{2}e^{\frac{w^2}{4}}\textrm{erf}\left(\frac{w}{2}\right)\right)-a\left(1-\frac{\sqrt{\pi}w}{2}e^{-\frac{w^2}{4}}\textrm{erfi}\left(\frac{w}{2}\right)\right)\right],
\end{align}	 
	Now let $z\rightarrow 1$ on the left-hand side of \eqref{genRamGuin}, from \eqref{asymptoticlarge} we can interchange the order of limit and summation, thereby obtaining,
{\allowdisplaybreaks	\begin{align}\label{final1}
	&\lim\limits_{z\rightarrow 1}\left(\sqrt{a}\sum_{n=1}^\infty \sigma_{-z}(n)n^{z/2}e^{-\frac{w^2}{4}}K_{\frac{z}{2},iw}(2na)-\sqrt{b}\sum_{n=1}^\infty \sigma_{-z}(n)n^{z/2}e^{\frac{w^2}{4}}K_{\frac{z}{2},w}(2nb)\right)\nonumber\\
	&=e^{-\frac{w^2}{4}}\sum_{n=1}^\infty \sum_{r=0}^{\infty}\frac{\sigma_{-1}(n)e^{-2na}}{r!\left(\frac{1}{2}\right)_r\left(\frac{1}{2}\right)_{2r}}\Phi_3\left(\frac{1}{2};\frac{1}{2}+2r;\frac{w^2}{4},\frac{w^2n a}{2}\right)\left(-\frac{wna}{4}\right)^{2r}\nonumber\\
	&\qquad- e^{\frac{w^2}{4}}\sum_{n=1}^\infty \sum_{r=0}^{\infty}\frac{\sigma_{-1}(n)e^{-2nb}}{r!\left(\frac{1}{2}\right)_r\left(\frac{1}{2}\right)_{2r}}\Phi_3\left(\frac{1}{2};\frac{1}{2}+2r;-\frac{w^2}{4},-\frac{w^2n b}{2}\right)\left(\frac{wnb}{4}\right)^{2r}.
\end{align}}%
where we used Theorem \ref{K_(half,w)}.\\
	From \eqref{lim1}, \eqref{lim2} and  \eqref{final1}, we arrive at Theorem \ref{genOfRamanujanFormula}.
	\end{proof}



\section{Concluding remarks}


While reduction formulas for $\Phi_3(b;c;x,y)$ are available in the literature for some special values of $b$ and $c$, for example, \cite[Equation (5.2)]{brychkov} 
\begin{align*}
\Phi_3\left(1;\frac{3}{2};w,z\right)=\frac{\sqrt{\pi} e^{w+\frac{z}{w}}}{4\sqrt{w}}\left[\textrm{erf}\left(\frac{w-\sqrt{z}}{\sqrt{w}}\right)+\textrm{erf}\left(\frac{w+\sqrt{z}}{\sqrt{w}}\right)\right],
\end{align*}
as of yet, no reduction formulas are known for the Humbert function 
\begin{align}\label{reduction}
\Phi_3\left(\frac{1}{2};\frac{1}{2}+2r;-\frac{w^2}{4},-\frac{w^2}{4}x\right), \ r\geq 0\ r\in\mathbb{Z},
 \end{align} 
 that we have encountered in our work. So it is of interest to find the reduction formulas for \eqref{reduction}, for, it will lead to a simplification of Theorem \ref{K_(half,w)}.
 
 The error functions and the ${}_2F_2\left(1,1;\frac{3}{2},2;-w^2\right)$ on the right-hand side of Theorem \ref{genOfRamanujanFormula} remind us of the cumulative distribution function (CDF) corresponding to the Voigt profile. This observation is now explained. Note that the Voigt profile $V(x;\sigma,\beta)$ is the convolution of a Gaussian function $f_\sigma(x)$ with a Lorentzian function $L_\beta(x)$, i.e., 
  \begin{align*}
  V(x;\sigma,\beta):=\int_{-\infty}^{\infty}f_\sigma(t)L_\beta(x-t)dt,
  \end{align*}
  where \cite[p. 95, Equation (4)]{Grim-Stir}
  \begin{align*}
  f_\sigma(x):=\frac{e^{-\frac{x^2}{2\sigma^2}}}{\sigma \sqrt{2\pi}},\ -\infty<x<\infty,\ \sigma>0,
  \end{align*}
  and \cite[Equation (4)]{Hesse}
  \begin{align*}
  L_\beta(x):=\frac{\beta}{\pi(x^2+\beta^2)},\ \beta>0,
  \end{align*}
  where $\sigma$ and $\beta$ are the Gaussian- and Lorentzian-component widths respectively.
  Then
  \begin{align}\label{voigtFadd}
  V(x;\sigma,\beta)&=\frac{\beta}{\sigma\pi\sqrt{2\pi}}\int_{-\infty}^{\infty}\frac{e^{-\frac{t^2}{2\sigma^2}}}{(x-t)^2+\beta^2}dt\nonumber\\
 &=\frac{\mathrm{Re}\left(\xi(w)\right)}{\sigma\sqrt{2\pi}}.
  \end{align}
  where we used \cite[p. 2, Equation (2)]{Ida} and $w=\frac{x+i\beta}{\sqrt{2}\sigma}$ and $\xi(y)$ is the Faddeeva function defined by \cite[p. 297, Formula 7.1.3]{Abra-Stegun}
  \begin{align}\label{faddeeva}
  \xi(y)=e^{-y^2}\left(1-\textrm{erf}(-iy)\right).
  \end{align}
 The CDF corresponds to Voigt profile is given by
 {\allowdisplaybreaks\begin{align}\label{distrbn}
 F(x_0;\sigma,\beta)&=\int_{-\infty}^{x_0}V(x;\sigma,\beta)dx=\int_{-\infty}^{x_0}\frac{\mathrm{Re}(\xi(w))}{\sigma\sqrt{2\pi}}dx\nonumber\\
 &=\mathrm{Re}\left(\frac{1}{\sqrt{\pi}}\int_{\frac{-\infty+i\beta}{\sigma\sqrt{2}}}^{\frac{x_0+i\beta}{\sigma\sqrt{2}}}\xi(w)dw\right)\nonumber\\
 &=\mathrm{Re}\left(\frac{1}{\sqrt{\pi}}\int_{\frac{-\infty+i\beta}{\sigma\sqrt{2}}}^{\frac{x_0+i\beta}{\sigma\sqrt{2}}}e^{-w^2}(1-\mathrm{erf}(-iw))dw\right),
 \end{align}}%
 where in the last step we used \eqref{faddeeva}. From \cite[p. 35, Formula 1.5.3.4]{prudnikov}
 \begin{align}\label{int2}
 \frac{1}{\sqrt{\pi}}\int e^{-w^2}\mathrm{erf}(-iw)dw=-\frac{iw^2}{\pi}{}_2F_2\left(1,1;\frac{3}{2},2;-w^2\right).
 \end{align}
 It is easy to see that
 \begin{align}\label{int3}
 \lim\limits_{w\rightarrow-\frac{\infty+i\beta}{\sigma\sqrt{2}}}\mathrm{erf}(w)=-\frac{1}{2},
 \end{align}
 and
 \begin{align}\label{int4}
 \lim\limits_{w\rightarrow-\frac{\infty+i\beta}{\sigma\sqrt{2}}}\frac{iw^2}{\pi}{}_2F_2\left(1,1;\frac{3}{2},2;-w^2\right)=i\infty.
 \end{align}
 Therefore from \eqref{distrbn}, \eqref{int2}, \eqref{int3} and \eqref{int4}, we have
 \begin{align}\label{cdf}
 F(x;\sigma,\beta)=\mathrm{Re}\left[\frac{1}{2}+\frac{\mathrm{erf}(w)}{2}+\frac{iw^2}{\pi}{}_2F_2\left(1,1;\frac{3}{2},2;-w^2\right)\right].
 \end{align}
 It would be worthwhile to see if Theorem \ref{genOfRamanujanFormula} has applications in the study of Voigt profile. For further information on Voigt profile, we refer the reader to \cite{Hesse}, \cite{huang}, \cite{Ida} and \cite{thom} and the references therein.


\section*{Acknowledgements}
The author is very thankful to Professor Atul Dixit for suggesting this problem and for his various important suggestions throughout this work. He also thanks to Shivam Dhama for fruitful discussion on cummulative distribution functions.

\end{document}